\documentclass[11pt,a4paper]{article}
\usepackage{amsmath,amssymb,amsthm}
\usepackage[utf8]{inputenc}
\usepackage[T1]{fontenc}
\usepackage{geometry}
\usepackage[colorlinks=true,linkcolor=blue,citecolor=blue,urlcolor=blue]{hyperref}

\newtheorem{theorem}{Theorem}[section]
\newtheorem{lemma}[theorem]{Lemma}
\newtheorem{corollary}[theorem]{Corollary}
\newtheorem{proposition}[theorem]{Proposition}
\theoremstyle{definition}
\newtheorem{remark}[theorem]{Remark}

\newcommand{\od}{\overrightarrow{d}}
\newcommand{\diam}{\operatorname{diam}}
\newcommand{\rad}{\operatorname{rad}}
\newcommand{\dist}{\operatorname{dist}}
\newcommand{\ecc}{\operatorname{ecc}}

\title{The asymptotic maximum oriented diameter of graphs}
\author{
  Jiangdong Ai\thanks{School of Mathematical Sciences and LPMC,
Nankai University. {\tt jd@nankai.edu.cn}.
 Funded by the National Natural Science Foundation of China (No.\,12522117)
and the Fundamental and Interdisciplinary Disciplines Breakthrough Plan
of the Ministry of Education of China (JYB2025XDXM207).}
  \hspace{2mm}
  Yaokun Feng\thanks{Center for Combinatorics and LPMC, Nankai University, and Department of Applied Mathematics, The Hong Kong Polytechnic University, Hong
Kong, China. {\tt 1031679547@qq.com}.}
 \hspace{2mm} Hui Lei\thanks{School of Statistics and Data Science, LPMC and KLMDASR, Nankai University. {\tt hlei@nankai.edu.cn}. Funded by the National Natural Science Foundation of China (Nos.\,12371351, 12431013), the Natural Science Foundation of Tianjin (24JCYBJC01670),
and the Fundamental Research Funds for the Central Universities, Nankai University (63263098).}
\hspace{2mm}
  Zijian Ren\thanks{School of Mathematical Sciences and LPMC,
    Nankai University. {\tt 2258544653@qq.com}.}}
\date{}

\begin{document}
\maketitle

\begin{abstract}
The oriented diameter of a connected bridgeless graph is the minimum
diameter of a strong orientation. Let $f(d)$ be the maximum oriented
diameter among all such graphs of diameter $d$.
In 1978, Chv\'atal and Thomassen proved $f(d)\le2d^2+2d$ and
constructed graphs showing that any quadratic upper bound on $f(d)$
must have leading coefficient at least $1/2$.
We prove that $f(d)\le \tfrac12d^2+7d$ for every integer $d\ge1$.
This matches the leading coefficient of their lower bound and establishes
$f(d)=\tfrac12d^2+O(d)$, determining the optimal quadratic coefficient.
Our proof gives a polynomial-time algorithm that constructs a strong
orientation satisfying the stated bound.
\end{abstract}
\section{Introduction}

Throughout, all graphs are finite, simple and connected unless stated
otherwise. A graph is \emph{bridgeless} if it has no edge whose deletion
disconnects it. An \emph{orientation} assigns one direction to each edge,
and an orientation is \emph{strong} if every vertex can reach every other
vertex by a directed path. Robbins~\cite{R39} proved that a connected
graph has a strong orientation if and only if it is bridgeless.

For vertices $x,y$ of a graph $G$, let $\dist_G(x,y)$ denote their
distance. For a nonempty set $A\subseteq V(G)$, define
$\dist_G(x,A)=\min_{a\in A}\dist_G(x,a)$.
The eccentricity, radius and diameter are
\[
 \ecc_G(x)=\max_{y\in V(G)}\dist_G(x,y),\qquad
 \rad(G)=\min_{x\in V(G)}\ecc_G(x),\qquad
 \diam(G)=\max_{x\in V(G)}\ecc_G(x).
\]
A vertex attaining the radius is a \emph{center}.

A subgraph $H$ of $G$ is \emph{$t$-step dominating} if
$\dist_G(x,V(H))\le t$ for every $x\in V(G)$.
For a strong orientation $\vec G$, distances are measured along directed
paths, and
\[
 \diam(\vec G)=\max_{x,y\in V(G)}\dist_{\vec G}(x,y).
\]
The \emph{oriented diameter} of a bridgeless graph is
\[
 \od(G)=\min\{\diam(\vec G):\vec G
                 \text{ is a strong orientation of }G\}.
\]
For each integer $d\ge1$, define
\[
 f(d)=\max\{\od(G):G\text{ is bridgeless and }\diam(G)=d\}.
\]
We regard the one-vertex graph as strongly oriented with diameter zero.

Chv\'atal and Thomassen~\cite{CT78} initiated the study of this parameter.
They proved that $f(d)\le2d^2+2d$, and constructed cactus graphs of
even diameter $d$ with oriented diameter $\tfrac12d^2+d$.
These bounds left the optimal quadratic coefficient between $\tfrac12$ and $2$.
Babu, Benson, Rajendraprasad and Vaka~\cite{BBRV21} subsequently obtained
$f(d)\le1.373d^2+O(d)$.
Their proof uses an oriented core: a strongly oriented subgraph with
small directed diameter and a controlled domination distance.
More recently, Ai, Lei and Ren~\cite{ALR26} proved
\[
 f(d)\le d^2+\left\lceil\tfrac{11}{2}d\right\rceil-5
 \qquad(d\ge3).
\]
We reduce the quadratic coefficient to $\tfrac12$, matching the
lower-bound coefficient.

For small diameters, Chv\'atal and Thomassen~\cite{CT78} proved
$f(2)=6$, while the work of Kwok, Liu and West~\cite{KLW10} and
Wang and Chen~\cite{WC22} established $f(3)=9$.
For $d=4$, Lin, Wang and You~\cite{LWY26} obtained $f(4)\le18$,
and Feng, Lei, Lian and Ren~\cite{FLLR26} subsequently improved this
to $f(4)\le16$.

For an edge $e$ of a bridgeless graph $G$, let $\ell_G(e)$ be the length
of a shortest cycle containing $e$, and write
\[
 \eta(G)=\max_{e\in E(G)}\ell_G(e).
\]
Huang, Li, Li and Sun~\cite{HLLS14} used this parameter to prove
\[
 \od(G)\le2\sum_{i=1}^{\rho}\min\{2i,\eta(G)-1\},
 \qquad \rho=\rad(G).
\]
Our main result retains these extension terms, but reduces the number
of rounds to $\lfloor d/2\rfloor$ by first constructing an oriented
core of diameter at most $6\rho$.

\begin{theorem}\label{thm:main}
Let $G$ be a bridgeless graph with diameter $d\ge1$, radius $\rho$,
and $\eta=\eta(G)$. Let $R=\lfloor d/2\rfloor$.
Then $G$ has a strong orientation $\vec G$ such that
\begin{equation}\label{eq:main}
 \begin{aligned}
 \diam(\vec G)
 &\le 6\rho+2\sum_{i=1}^{R}\min\{2i,\eta-1\}\\
 &\le 2R(R+1)+6\rho
 \le \tfrac12d^2+7d.
 \end{aligned}
\end{equation}
In particular, $f(d)\le\tfrac12d^2+7d$ for every $d\ge1$.
\end{theorem}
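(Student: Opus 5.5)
The argument has three stages: build a small strongly oriented core, grow it outward in $R$ rounds using the Huang--Li--Li--Sun extension step, and then add up the costs. For the core, I would fix a center $c$ of $G$ with $\ecc_G(c)=\rho$. The goal is a strongly oriented subgraph $H_0$ with two properties: $\diam(\vec H_0)\le 6\rho$, and $H_0$ is $R$-step dominating, i.e.\ $\dist_G(x,V(H_0))\le \lfloor d/2\rfloor$ for every $x$.

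One natural approach is to take a maximal family of edge-disjoint ``ears'' through a central region. Concretely, I would start from $c$ and repeatedly add a shortest cycle, or a shortest ear of length at most $2\rho+1$, that passes within distance about $\rho$ of $c$. Each ear is oriented consistently, so its detour costs at most one diameter-scale term. The aim is a core built from at most three geodesic-type paths of length $\le\rho$ into and out of $c$, which would give the bound $6\rho$. The key claim is that if some vertex $x$ had $\dist(x,H_0)>R$, then a path from $x$ to the farthest core point would force $\diam(G)>d$, since both endpoints of a diametral pair through the core would be too far. This shows that only $R$ further rounds are needed instead of $\rho$.

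The extension stage is an induction on $i=1,\dots,R$. Let $H_i$ be the subgraph induced by vertices at distance $\le i$ from $H_0$ (in the extension scheme, the layers are taken from the far side). As in the Huang--Li--Li--Sun argument, each edge leaving the current oriented subgraph lies on a cycle of length $\le \min\{2i+1,\eta\}$ that returns to it: the cycle is either the shortest cycle through the edge, of length at most $\eta$, or one closed up via two layer paths. Orienting that cycle adds at most $\min\{2i,\eta-1\}$ to the distance in each direction. Hence
\[
\diam(\vec H_{i})\le \diam(\vec H_{i-1})+2\min\{2i,\eta-1\}.
\]
Summing over the rounds gives the first inequality. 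The second follows from $\min\{2i,\eta-1\}\le 2i$, and the third from $\rho\le d$ and $2R(R+1)\le \tfrac12 d^2+d$.

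I expect the core construction to be the main obstacle. It requires proving simultaneously that the core has directed diameter at most $6\rho$ and that it dominates within $\lfloor d/2\rfloor$ steps, not within $\rho$. The first thing I would try is to take $H_0$ as the union of strongly oriented ear closures of a BFS tree truncated at depth $\rho-R$ around $c$. I would then bound its directed diameter by going up to $c$ and back down through at most three oriented ears, each of length $\le 2\rho$. If that fails, I would instead choose a diametral-midpoint-based core.
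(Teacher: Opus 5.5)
\textbf{Verdict: genuine gap.} Your outer frame is the paper's: an $R$-step dominating oriented core with $\diam(\vec K)\le 6\rho$, then an orientation-preserving shortest-ear extension over $R$ rounds, then the same arithmetic. The extension step is essentially Lemma~\ref{lem:ear-extension}; there the ear lengths are governed by the remaining domination distance, and overlapping ears must be oriented compatibly, which you leave to the citation. The gap is the core itself, i.e.\ Theorem~\ref{thm:core}. This is the new content of the theorem, and you do not prove it.

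Your domination ``key claim'' is not an argument: a vertex far from an arbitrary $H_0$ does not produce a pair at distance more than $d$. The paper gets domination from a Helly argument. The balls of radius $R$ are connected and, since $d\le 2R+1$, pairwise meet or are joined by an edge. So their supports in any tree decomposition are pairwise intersecting subtrees, and some bag meets all of them (Lemma~\ref{lem:central}). This is applied to a decomposition whose bags are unions of BFS fundamental cycles through one vertex $v$, indexed by a refinement $\mathcal T$ of the BFS tree of maximum degree three (Lemma~\ref{lem:cycle-bags}).

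The bound $6\rho$ needs a second idea that your plan lacks. Fundamental cycles that share tree edges demand opposite directions on them, so a ``union of strongly oriented ear closures'' is not an orientation at all. The paper labels vertices by the at most three components of $\mathcal T-t$. This splits the bag's cycles into two groups, each consistently orientable, and the two groups are combined asymmetrically (Lemma~\ref{lem:asymmetric}). The result is distances at most $2\rho$ into $v$ and $4\rho$ out of $v$.

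Your concrete fallback, a core containing the BFS ball of radius $\rho-R$ about a center $c$, is indeed $R$-step dominating. However, its oriented diameter is not $O(\rho)$ in general. Attach a copy of the cactus $B_k$ of Section~\ref{sec:lower} at every vertex of a cycle of length $8k$. The centers are exactly the cycle vertices, and $\rho=5k$, $d=6k$, $R=3k$, $\rho-R=2k\ge k$. So the ball about any center contains the whole copy of $B_k$ attached there.

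Every block of $B_k$ has a vertex lying in no other block. Hence any strongly connected subgraph containing this ball contains the whole copy. As in the proof of Lemma~\ref{lem:cactus-root}, its orientation restricts to a strong orientation of the copy with unchanged distances. Its directed diameter is therefore at least $k(k+1)$, which exceeds $30k=6\rho$ once $k\ge 30$. Yet the long cycle alone is a valid core in this graph. The core has to be \emph{selected*} (the paper picks a single central bag), not grown as a neighbourhood of a center.
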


The bound $\tfrac12d^2+7d$ in Theorem~\ref{thm:main} strictly improves
the bound of Ai, Lei and Ren~\cite{ALR26} for every integer $d\ge5$.
It need not improve the radius--$\eta$ bound above for every choice
of the parameters.
The new part of the argument is the following core theorem.

\begin{theorem}\label{thm:core}
Every bridgeless graph $G$ of diameter $d\ge1$ and radius $\rho$ contains
an $\lfloor d/2\rfloor$-step dominating subgraph $K$ with a strong
orientation $\vec K$ satisfying
$\diam(\vec K)\le6\rho$. We refer to any such strongly oriented subgraph $\vec K$
as an \emph{oriented core} of $G$.
\end{theorem}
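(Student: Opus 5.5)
The plan is to build $K$ as a union of ears grown from a center, using only the radius and bridgelessness. Fix a center $c$, so $\ecc_G(c)=\rho$, and let $R=\lfloor d/2\rfloor$. Since $d\le 2\rho$ we have $R\le\rho$.

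\textbf{Reduction to a ball.} Put $B=\{v:\dist_G(c,v)\le\rho-R\}$. Let $x$ be any vertex and take a shortest $c$--$x$ path. If $\dist_G(c,x)\le \rho-R$, then $x\in B$. Otherwise the vertex of this path at distance $\rho-R$ from $c$ lies in $B$ and is at distance at most $\rho-(\rho-R)=R$ from $x$. Hence any subgraph $K$ with $B\subseteq V(K)$ is $R$-step dominating. It therefore suffices to find a strongly oriented subgraph $\vec K$ with $B\subseteq V(\vec K)$ and $\diam(\vec K)\le 6\rho$. I will aim for the stronger invariant that every $v\in V(K)$ satisfies $\dist_{\vec K}(c,v)\le 3\rho$ and $\dist_{\vec K}(v,c)\le 3\rho$.

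\textbf{Ear construction along a BFS tree.} Fix a BFS tree $T$ rooted at $c$, and for a vertex $x$ let $D_x$ be the vertex set of its subtree. Start with $K=\{c\}$ and process the vertices of $B$ in nondecreasing order of level. Suppose $x\in B\setminus V(K)$ is at level $j\ge1$ and its tree parent $y$ is already in $K$. The edge $xy$ is not a bridge, so some edge $uw$ with $uw\ne xy$ has $u\in D_x$ and $w\notin D_x$. I form the ear
\[
 y\to x \leadsto u \to w \leadsto K,
\]
where $x\leadsto u$ follows the tree path inside $D_x$, of length at most $\rho-j$. From $w$ I follow the tree path toward $c$ only until it first meets $K$. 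That tree path from $w$ never uses $xy$, since $w\notin D_x$. It does reach $K$, because $c\in K$, and it has length at most $\rho$. I truncate the ear at its first vertex in $K$ and orient it as a directed path starting at $y$, adding its internal vertices to $K$. Every ear has length at most about $2\rho+2$, and the result stays strong after each step.

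\textbf{Distance bookkeeping, the main obstacle.} The danger is accumulation: a new vertex reaches $c$ only through the exit point of its ear, and that exit point may itself have been added by an earlier long ear. My first attempt is a level-sensitive invariant. For each $v\in K$ I want
\[
 \dist_{\vec K}(c,v)\le \dist_G(c,v)+2\rho
 \quad\text{and}\quad
 \dist_{\vec K}(v,c)\le 2\rho+\rho .
\]
To make this hold, I will choose the exit portion $w\leadsto K$ to descend monotonically in level. I will also choose $uw$ to minimize the total ear length. The hope is that the exit point then lies at a level no greater than that of $w$, so the remaining distance to $c$ charged there is already covered by the invariant rather than added on top of it.

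If this bookkeeping fails, I will fall back on the Chv\'atal--Thomassen style estimate that the shortest cycle through a tree edge at level $j$ has length at most $2j+1$. With it I would replace each ear by a shortest cycle through $xy$, restricted to $B$-levels. This gives per-ear costs of at most $2(\rho-R)+1$, and summing only over one descending chain should stay within $6\rho$. Either way, the key point to verify is that directed distances telescope along levels instead of adding ear lengths, and that is the step I expect to need the most care.
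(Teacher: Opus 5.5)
Your reduction to the ball is where the argument breaks. It cannot be rescued by better bookkeeping, because the statement you reduce to is false. It is not true in general that some strongly oriented subgraph $K$ with $V(K)\supseteq B=\{v:\dist_G(c,v)\le\rho-R\}$ has $\diam(\vec K)\le 6\rho$.

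Take a cycle of length $4s+1$ and attach two copies of the cactus $B_s$ from Section~\ref{sec:lower} at each of its vertices. Then $d=4s$ and $\rho=3s$, the centers are exactly the cycle vertices, and $\rho-R=s$. So for every possible choice of center $u$, your ball contains both copies of $B_s$ attached at $u$. Every cycle block of these copies has a vertex of degree two. Every arc of a strong digraph lies on a directed cycle, and in a cactus such a cycle is a whole block. Hence any strongly connected $K$ with $V(K)\supseteq B$ contains both copies entirely. Since they meet the rest of the graph only at $u$, Lemma~\ref{lem:cactus-root} gives $\diam(\vec K)\ge 2s(s+1)$, which exceeds $18s=6\rho$ once $s\ge 9$. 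Theorem~\ref{thm:core} survives because the cycle itself is $s$-step dominating with oriented diameter $4s$. The ball is simply the wrong dominating set: it can be forced to contain a deep cactus whose orientation cost is quadratic.

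Your stronger invariant ($3\rho$ each way to $c$) is therefore unattainable, so the telescoping you hope for cannot hold. Your fallback fails independently. A tree edge at level $j$ need not lie on a cycle of length at most $2j+1$: in $C_{2k+1}$ the two edges at $c$ are at level $1$ but lie only on a cycle of length $2k+1$.

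The missing idea is to choose the $R$-step dominating set as a union of short cycles through a single vertex that can be oriented without chaining.
\begin{itemize}
\item The paper takes a BFS tree from a center, so every fundamental cycle has length at most $2\rho+1$.
\item It refines this tree into an indexing tree of maximum degree three. The bags are the unions of fundamental cycles whose indexing path meets a given node, and they form a tree decomposition (Lemma~\ref{lem:cycle-bags}).
\item The Helly property, applied to the subtrees meeting the radius-$R$ balls, yields an $R$-step dominating bag (Lemma~\ref{lem:central}). This bag need not contain the ball around $c$.
\item All cycles of this bag pass through one vertex $v$. The at most three components of the indexing tree minus the chosen node supply labels that split the cycles into two consistently orientable groups.
\item The asymmetric merge of Lemma~\ref{lem:asymmetric} then gives $\dist_{\vec K}(x,v)\le 2\rho$ and $\dist_{\vec K}(v,x)\le 4\rho$.
\end{itemize}
Each vertex returns to $v$ along a single cycle rather than a chain of ears. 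That is precisely the accumulation your ear construction has no way to control.
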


The domination distance in Theorem~\ref{thm:core} is responsible for the
coefficient $\tfrac12$. Once the core is oriented, a shortest-ear
construction extends its orientation at a cost of at most $2R(R+1)$.
We give a complete proof of this extension step, following the
shortest-ear approach of Chv\'atal and Thomassen~\cite{CT78} and
Huang, Li, Li and Sun~\cite{HLLS14}, with the orientation of the core
preserved throughout.

To obtain the core, we use the fundamental cycles of a
shortest-path spanning tree of $G$. We replace each node of this tree
by a path to obtain an indexing tree of maximum degree at most three,
without changing $G$. The corresponding fundamental-cycle bags form
a tree decomposition of $G$. A Helly argument selects a bag which is
$R$-step dominating. The at most three components obtained by deleting
its indexing node provide three labels for the original vertices other
than the root. These labels divide the relevant fundamental cycles into
two groups, each of which can be oriented consistently in $G$.
An asymmetric combination preserves short return paths to the root,
giving distances at most $2\rho$ to the root and $4\rho$ from it,
and hence the bound $6\rho$.

Section~\ref{sec:extension} establishes the extension lemma.
Section~\ref{sec:core} proves Theorem~\ref{thm:core} and completes the
proof of Theorem~\ref{thm:main}.
Section~\ref{sec:lower} gives a self-contained cactus construction
showing that the leading coefficient is best possible.

\section{Extending a strong orientation}\label{sec:extension}

We give the extension bound, including the compatibility argument
for overlapping ears.

\begin{lemma}[Orientation-preserving extension]\label{lem:ear-extension}
Let $G$ be a finite connected bridgeless graph with at least two vertices,
and let $H$ be a nonempty
$t$-step dominating subgraph of $G$, where $t$ is a nonnegative integer. Put $\eta=\eta(G)$.
Every strong orientation $\vec H$ of $H$ extends to a strong orientation $\vec G$
of $G$ satisfying
\[
 \diam(\vec G)
 \le \diam(\vec H)
       +2\sum_{i=1}^{t}\min\{2i,\eta-1\}
 \le \diam(\vec H)+2t(t+1).
\]
A single vertex is allowed as a strongly oriented subgraph of diameter zero.
\end{lemma}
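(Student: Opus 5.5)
We argue by induction on $t$, adding one layer of vertices at a time and adding at most $2\min\{2t,\eta-1\}$ to the diameter per layer. For $t=0$ we have $H$ spanning, so $V(H)=V(G)$. We orient the edges of $E(G)\setminus E(H)$ arbitrarily; this does not increase any directed distance, since $\vec H$ is already strong and spanning. The claim follows.

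For the inductive step, let $N=\{x:\dist_G(x,V(H))=1\}$. We will build a subgraph $H'\supseteq H$ containing $V(H)\cup N$ and a strong orientation $\vec H'$ extending $\vec H$ with $\diam(\vec H')\le\diam(\vec H)+2\min\{2t,\eta-1\}$. Since every vertex of $G$ is within distance $t-1$ of $V(H')$, the induction hypothesis (with $t-1$) applied to $H'$ then gives the bound. The exponents line up if we instead peel layers so that the cheapest layer, of cost $\min\{2,\eta-1\}$, is added last. Concretely, we run the induction in the form "$H$ is $t$-step dominating implies the first added layer costs at most $2\min\{2t,\eta-1\}$," i.e.\ we process the outermost term first. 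Equivalently, we process vertices in order of decreasing need. The summation is symmetric in any case, since we add one term $\min\{2i,\eta-1\}$ for each $i=1,\dots,t$. The inequality $\sum_{i=1}^t\min\{2i,\eta-1\}\le t(t+1)$ is trivial.

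To handle a layer, we take each $x\in N$ not yet absorbed, together with an edge $e=xh$ with $h\in V(H)$. Since $G$ is bridgeless, $e$ lies on a cycle, and we pick a shortest ear: a path $P$ starting with $e$, leaving and re-entering the current subgraph, of length at most $\min\{2t+1,\eta\}$. The bound $\eta$ comes from the shortest cycle through $e$, and the bound $2t+1$ from the fact that every vertex is within $t$ of $H$: following a shortest cycle through $e$, we can cut at the first return to $H$. Getting exactly $2t+1$ requires care, e.g.\ by using a shortest path from the far endpoint back to $H$. We orient $P$ as a directed path between its two endpoints in $H$. Then every new internal vertex $y$ reaches $H$ and is reached from $H$ within total extra length at most $|P|-1\le\min\{2t,\eta-1\}$ along the ear. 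Hence for $u,v$ in the new graph, $\dist(u,v)\le(\text{exit cost of }u)+\diam(\vec H)+(\text{entry cost of }v)\le\diam(\vec H)+2\min\{2t,\eta-1\}$, since the exit and entry costs are each at most $\min\{2t,\eta-1\}$. This is the factor $2$.

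The main obstacle is overlapping ears. A later ear may share vertices or edges with an earlier one that are already oriented, so the new ear cannot be oriented freely. The exit and entry costs of vertices on earlier ears must also not increase. We handle this by adding ears one at a time to a growing graph $H_j$, always measuring the exit and entry costs relative to $\vec H$ itself rather than $H_j$. When the new shortest ear first hits an already-absorbed vertex $z$ of some earlier ear, we truncate it there. The truncated ear is then shorter, and $z$ already has both exit and entry costs at most $\min\{2t,\eta-1\}$ to $H$. The new vertices then inherit the bounds, using the portion along the ear plus $z$'s routes. This needs the inequality that position along a truncated shortest ear plus $z$'s cost stays at most $\min\{2t,\eta-1\}$. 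We would prove this by choosing ears in order of the distance of the far layer and exploiting that $P$ is shortest, which is the delicate compatibility check.
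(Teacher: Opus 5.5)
Your overall plan is the paper's: one round per layer with $s=t,t-1,\dots,1$, costs measured to the fixed set $V(H)$ during a round, and a new ear cut where it first meets a covered vertex. Your induction on $t$ is also fine as stated, and no reordering of layers is needed. The gap is that the step you defer as ``the delicate compatibility check'' is the heart of the lemma, and as you set it up it can fail. Knowing only that $z$ has exit and entry costs at most $\min\{2t,\eta-1\}$ is not enough. Suppose $z$ is the second vertex of an earlier directed ear $Q$ of length $2t+1$; its exit cost along $Q$ is then $2t$. If you direct the truncated piece $P[h,z]=x_0\cdots x_k$ towards $z$, the vertex $x_1$ needs $(k-1)+2t$ steps to return along the arcs you have placed, and $k\ge2$ here. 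So the direction of the new piece must be \emph{chosen}, and ordering ears by ``the distance of the far layer'' does not address this. Your bound $|P|\le 2t+1$ is likewise only gestured at.

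The missing ingredient is the distance profile of a shortest $(A,e)$-ear $P=x_0x_1\cdots x_\ell$ with first edge $e=x_0x_1$, relative to the fixed set $A$: $\dist_G(x_j,A)=\min\{j,\ell-j\}$. An $x_j$--$A$ path avoiding $e$ of length less than $\ell-j$ would, together with $P[x_1,x_j]$, give an $x_1$--$A$ walk in $G-e$ shorter than $P[x_1,x_\ell]$. A shortest one using $e$ must end with $x_1x_0$, and its part before $x_1$ has length at least $j-1$, since $P[x_1,x_j]$ is geodesic in $G-e$. The middle vertex then gives $\ell\le2t+1$.

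For overlaps, let $z=x_k$ be the first covered internal vertex of $P$ and $Q$ a recorded ear through $z$. By the profile applied to $Q$, one side of $Q$ runs from $z$ to $A$ in exactly $\dist_G(z,A)\le\ell-k$ steps. Replace $P[z,x_\ell]$ by that side, and orient $P[x_0,z]$ to continue that side's existing direction. Each prefix edge has an uncovered endpoint, so none was oriented before. Because the interior of the prefix is uncovered, the result is again an $(A,e)$-ear, of length at most $\ell$, hence again a \emph{shortest} one.

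This invariant, that every recorded ear is a shortest ear so the profile applies to it, is what lets the splicing iterate in any order. It puts every new vertex on a directed ear of length at most $\min\{2t+1,\eta\}$, giving both costs at most $\min\{2t,\eta-1\}$.
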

\begin{proof}
For a nonempty set $A\subsetneq V(G)$, an $A$-ear is a path with its
endpoints in $A$ and all internal vertices outside $A$; we also allow a
cycle meeting $A$ in exactly one vertex. For an edge $e=av$ with $a\in A$
and $v\notin A$, an $(A,e)$-ear has first edge $av$ when traversed from
$a$. Shortest always refers to this prescribed first undirected edge.
The eventual consistent orientation of an ear may follow either traversal.

We first record the distance profile of a shortest $(A,e)$-ear
$P=x_0x_1\cdots x_\ell$, where $x_0=a$ and $x_1=v$.
Such an ear exists: follow a cycle containing $e$ from $a$ along $e$
until its first return to $A$. Moreover,
$P[v,x_\ell]$ is a shortest $v$--$A$ path in $G-e$, since prepending
$av$ to any shortest such path gives an $(A,e)$-ear.
For $1\le j<\ell$, every $x_j$--$A$ path avoiding $e$ has length at
least $\ell-j$, since adjoining it to $P[v,x_j]$ and deleting
closed subwalks would otherwise give a shorter $v$--$A$ path in
$G-e$. A shortest
$x_j$--$A$ path using $e$ must traverse $v a$ as its last edge.
Its preceding part has length at least $j-1$, because $P[v,x_j]$ is
geodesic in $G-e$. Following either side of $P$ gives the reverse
inequality, and hence
\begin{equation}\label{eq:ear-profile}
 \dist_G(x_j,A)=\min\{j,\ell-j\}.
\end{equation}
Consequently, if $A$ is $s$-step dominating, a middle vertex gives
$\ell\le2s+1$. The cycle construction above also gives
$\ell\le\ell_G(e)\le\eta$.

Now fix an $s$-step dominating set $A$, with $s\ge1$, carrying the
current strong orientation. We enlarge its vertex set to contain every
neighbour of $A$, while keeping every existing arc. Throughout this
round, $A$ remains fixed. Record every ear used in the round, including
its consistent orientation. Call a vertex covered if it belongs to $A$
or to an already recorded ear.

If an uncovered neighbour $v$ of $A$ remains, choose $e=av$ and a
shortest $(A,e)$-ear $P$. If all internal vertices of $P$ are uncovered,
orient it consistently and record it. Otherwise let $w$ be the first
covered internal vertex of $P$, starting from $a$. Choose an earlier
recorded ear $Q$ containing $w$. A shorter side of $Q$ from $w$ to $A$
has length $\dist_G(w,A)$ by~\eqref{eq:ear-profile}.
Replace the part of $P$ after $w$ by that side of $Q$, obtaining $P'$.

All internal vertices of $P[a,w]$ are uncovered, whereas every vertex
of $Q$ is covered. Thus the two pieces meet only at $w$ and possibly
at their endpoints in $A$. They form a valid $(A,e)$-ear, open or closed.
Its length is no greater than that of $P$, so the minimality of $P$
makes $P'$ a shortest $(A,e)$-ear as well. The chosen side of $Q$ is
already directed in one direction. Orient the new prefix $P[a,w]$
in the matching direction to make all of $P'$ a directed ear, and
record $P'$. No old edge is reoriented, since every prefix edge has
an uncovered endpoint. The newly recorded ear contains $v$, so this
procedure terminates after finitely many additions.

Every newly covered vertex lies on a recorded directed ear of length
at most $\min\{2s+1,\eta\}$. It can therefore reach $A$ and be reached
from $A$ within
$b(s)=\min\{2s,\eta-1\}$ steps in the enlarged orientation.
All these paths coexist, and strong connectivity is preserved.
Since all neighbours of $A$ have been covered, the enlarged vertex set
is $(s-1)$-step dominating: a shortest path to $A$ meets a neighbour
of $A$ one step before reaching $A$.

Starting with $A=V(H)$, perform these rounds with
$s=t,t-1,\ldots,1$, stopping earlier if every vertex has been covered.
Every vertex can then reach $V(H)$ and be reached from $V(H)$ within
$\sum_{i=1}^{t}b(i)$ steps. Routing an ordered pair through the
unchanged orientation $\vec H$ proves the asserted bound. Orient all
remaining edges arbitrarily. The case $t=0$ only requires this final step.
\end{proof}

\section{Construction of the oriented core}\label{sec:core}
\subsection{Central bags and fundamental cycles}

Recall that a \emph{tree decomposition} of a graph $X$ consists of a tree
$T$ and sets $B_t\subseteq V(X)$, called \emph{bags}, such that every
vertex lies in a bag, the ends of every edge lie together in a bag,
and for each $x\in V(X)$ the set $\{t:x\in B_t\}$ induces a connected
subtree of $T$.

\begin{lemma}[A central bag]\label{lem:central}
Let $X$ be a connected graph of diameter $d$.
Every tree decomposition of $X$ has a bag $B$ such that
\[
 \dist_X(x,B)\le\lfloor d/2\rfloor
 \qquad\text{for every }x\in V(X).
\]
\end{lemma}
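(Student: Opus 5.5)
Let $r=\lfloor d/2\rfloor$ and let $N_r[x]$ denote the closed ball of radius $r$ around $x$ in $X$. The plan is to show that, for some bag $B$, every ball $N_r[x]$ meets $B$; this is exactly the assertion of the lemma. We use two standard facts about a tree decomposition $(T,\{B_t\})$. First, for a connected vertex set $S\subseteq V(X)$, the set $T_S=\{t: B_t\cap S\neq\emptyset\}$ induces a subtree of $T$. Indeed, each $\{t:x\in B_t\}$ is a subtree, and for every edge $xy$ of $X$ the subtrees of $x$ and $y$ share a node, namely a bag containing both ends. A union of subtrees glued along the edges of a connected graph is therefore connected. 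Second, subtrees of a tree have the Helly property: if they pairwise intersect, they share a common node.

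Since every ball $N_r[x]$ induces a connected subgraph, it suffices to check that the subtrees $T_{N_r[x]}$, for $x\in V(X)$, pairwise intersect. The Helly property then yields a node $t$ lying in all of them, and $B=B_t$ meets every ball $N_r[x]$, so $\dist_X(x,B)\le r$ for every $x$.

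For pairwise intersection, fix $x,y$ with $\dist_X(x,y)\le d$. If $\dist_X(x,y)\le 2r$, choose a vertex $z$ on a shortest $x$--$y$ path within distance $r$ of both. Then $z\in N_r[x]\cap N_r[y]$, so any bag containing $z$ lies in both subtrees. The remaining case, and the main obstacle, is $d$ odd with $\dist_X(x,y)=d=2r+1$. Here the balls need not share a vertex, but a shortest path contains an edge $uv$ with $\dist(x,u)=r$ and $\dist(y,v)=r$. Some bag contains both $u$ and $v$; that node belongs to $T_{N_r[x]}$ through $u$ and to $T_{N_r[y]}$ through $v$. So the subtrees still intersect, because the decomposition places the ends of every edge in a common bag.

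Finally, the degenerate case $d=0$ (a single vertex) is trivial. The Helly property for finitely many subtrees is standard: one can prove it by induction, or by noting that pairwise intersecting subtrees all contain a common node on the path between any two of them. So the whole argument reduces to the edge case above together with this Helly property.
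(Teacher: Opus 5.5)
Your proposal is correct and follows the paper's proof essentially step for step. Both arguments show that the subtrees $T_{N_r[x]}$, consisting of nodes whose bags meet the radius-$r$ balls, pairwise intersect: through a common vertex, or through a bag containing the middle edge $uv$ of a shortest path when the distance is $2r+1$. The Helly property for subtrees of a tree then yields the central bag, which the paper likewise invokes as a standard fact.
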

\begin{proof}
Put $R=\lfloor d/2\rfloor$. For each vertex $x$, let
$A_x=\{y:\dist_X(x,y)\le R\}$.
This set is connected, since the initial segments of a shortest path
from $x$ stay in $A_x$. The nodes of the decomposition tree whose bags
meet $A_x$ therefore form a nonempty subtree $T_x$.
Indeed, the bags containing a fixed vertex form a subtree, and the
supports of adjacent vertices meet in an edge-containing bag.

For any $x,y$, a shortest $x$--$y$ path has integer length at most
$d\le2R+1$. Its portions within distance $R$ of its ends either meet
or are joined by an edge. Thus $A_x,A_y$ intersect or have
an edge between them. In both cases $T_x\cap T_y\ne\varnothing$.
The Helly property of subtrees of a tree gives a node belonging to
every $T_x$. Its bag meets every $A_x$, as required.
\end{proof}

\begin{lemma}[Fundamental-cycle bags]\label{lem:cycle-bags}
Let $G$ be a connected bridgeless graph with at least two vertices,
and let $T$ be a spanning tree of $G$.
For each $x\in V(T)$, take a path $Q_x$ on $\deg_T(x)$ nodes, with
these paths pairwise disjoint. Assign the tree edges incident with
$x$ bijectively to the nodes of $Q_x$. For each $xy\in E(T)$, join
the nodes of $Q_x$ and $Q_y$ assigned to $xy$, and denote the resulting
graph by $\mathcal T$. Choose a representative $r_x\in V(Q_x)$ for each $x$.
For each edge $e=ab\in E(G)\setminus E(T)$, write
$P_e=T[a,b]$, $C_e=P_e+e$ and $\widehat P_e=\mathcal T[r_a,r_b]$.
For $t\in V(Q_v)$, let $H_t$ consist of the vertex $v$ and all cycles
$C_e$ with $t\in V(\widehat P_e)$, and let $B_t=V(H_t)$.
Then $\mathcal T$ is a tree of maximum degree at most three, and
$(\mathcal T,\{B_t:t\in V(\mathcal T)\})$ is a tree decomposition of $G$.
Moreover, $H_t$ is nontrivial, connected and bridgeless,
$T_t=H_t\cap T$ is a spanning tree of $H_t$, and every fundamental
cycle of $H_t$ relative to $T_t$ contains $v$.
If the cycles $C_e$ have length at most $L$, the same holds for these
fundamental cycles of $H_t$.
\end{lemma}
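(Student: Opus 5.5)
The plan is to check the claims in the order stated, using throughout the contraction map $\pi:\mathcal T\to T$ that sends every node of $Q_x$ to $x$.

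\textbf{$\mathcal T$ is a tree of maximum degree at most three.} Contracting each path $Q_x$ to a single node recovers $T$, and each tree edge $xy$ yields exactly one edge of $\mathcal T$. So $\mathcal T$ is connected, and it has $\sum_x\deg_T(x)-1+|E(T)|-\ldots$ nodes and edges matching a tree; more simply, any cycle in $\mathcal T$ would project under $\pi$ to a closed walk in $T$ using some tree edge exactly once, which is impossible. As for degrees, a node of $Q_x$ has at most two neighbours inside $Q_x$ and exactly one neighbour outside, namely through its assigned edge. Note that $\deg_T(x)\ge1$ because $G$ has at least two vertices.

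\textbf{Tree decomposition axioms.} Since $\pi(\widehat P_e)$ is a walk from $a$ to $b$ in $T$ that contains $T[a,b]$ as a subpath, every vertex $x\in V(P_e)$ has some node of $Q_x$ on $\widehat P_e$. I will sharpen this to the following key claim:
\[
 \{t: t\in V(\widehat P_e)\}\cap V(Q_x)\neq\varnothing \iff x\in V(P_e).
\]
This holds because a path in the tree $\mathcal T$ projects to the unique path $P_e$ in $T$, with no backtracking: entering and leaving $Q_x$ uses distinct edges of $T$ unless the path ends there.

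Vertex cover is immediate, since $x\in B_t$ for every $t\in Q_x$. For edge cover, a tree edge $xy$ lies in $B_t$ for $t\in Q_x$ assigned to $xy$, because $y\in C_e$ for some $e$. Here bridgelessness is used: the tree edge $xy$ lies on some fundamental cycle $C_e$, so $\widehat P_e$ passes through the edge of $\mathcal T$ joining the two nodes assigned to $xy$, and the bag at $t$ contains both $x$ and $y$. A non-tree edge $e=ab$ lies in $B_{r_a}$.

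Connectivity of $\{t:x\in B_t\}$ is the main obstacle. This set equals $V(Q_x)\cup\bigcup_{e:\,x\in C_e}V(\widehat P_e)$. Each $\widehat P_e$ with $x\in C_e$ meets $Q_x$ by the key claim, and $Q_x$ is connected, so the union is connected.

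\textbf{Properties of $H_t$.} Let $t\in Q_v$. By the key claim, every $C_e$ in $H_t$ passes through $v$. Hence $H_t$ is connected, and it is bridgeless as a union of cycles through a common vertex. It is nontrivial because some tree edge at $v$ lies on a fundamental cycle. To see that $T_t$ spans $H_t$ and is a tree, observe that $T_t$ is the union of the paths $P_e$, all of which contain $v$; this union is a subtree of $T$ containing every vertex of $H_t$. The edges of $H_t$ outside $T_t$ are exactly the chosen edges $e$, and each has fundamental cycle $C_e$ relative to $T_t$. Therefore every such cycle contains $v$ and has length at most $L$.
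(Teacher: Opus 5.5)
Your proof is correct and follows essentially the same route as the paper. You contract the paths $Q_x$ to recover $T$, show that $\widehat P_e$ meets $Q_x$ exactly when $x\in V(P_e)$, use bridgelessness to put every tree edge on some $C_e$, and note that all cycles of $H_t$ pass through $v$, so their tree paths form a subtree spanning $H_t$. Where the paper simply asserts that contraction maps $\widehat P_e$ onto $P_e$, you add the cycle-projection and non-backtracking justifications.

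One wording fix concerns nontriviality of $H_t$. You need the tree edge assigned to $t$ itself, exactly as in your edge-cover step. An arbitrary tree edge at $v$ lying on some $C_e$ only shows that $\widehat P_e$ meets $Q_v$, not that it passes through $t$. Also delete the unfinished node--edge count in your first paragraph; your cycle-projection argument already makes it unnecessary.
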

\begin{proof}
Contracting the paths $Q_x$ gives $T$, so $\mathcal T$ is a tree.
Each node has at most two neighbours on its path $Q_x$ and exactly
one neighbour outside that path. Thus its degree is at most three.
The contraction maps $\widehat P_e$ onto $P_e$. In particular,
$\widehat P_e$ meets $Q_x$ if and only if $x\in V(P_e)$.

Every vertex $x$ lies in every bag indexed by a node of $Q_x$.
The ends of a non-tree edge $e$ lie together in each bag indexed
by $\widehat P_e$.
Every tree edge $xy$ belongs to some $C_e$, since otherwise its
fundamental cut would make it a bridge of $G$.
The corresponding $\widehat P_e$ uses the edge joining $Q_x$ to $Q_y$,
so a bag indexed by either end of that edge contains both $x$ and $y$.
For a fixed vertex $x$ of $G$, its support in $\mathcal T$ is exactly
\[
 \{t:x\in B_t\}
   =V(Q_x)\cup
     \bigcup_{\substack{e\in E(G)\setminus E(T)\\x\in V(P_e)}}
        V(\widehat P_e).
\]
Each path in the union meets $Q_x$. Hence the support is connected,
which proves the tree-decomposition axioms.

For each $t\in V(Q_v)$, let $vy$ be the tree edge assigned to $t$.
This edge belongs to some $C_e$, and the corresponding $\widehat P_e$
uses the edge joining $Q_v$ to $Q_y$ at $t$. Hence $H_t$ contains
$C_e$ and is nontrivial.
If $t\in V(Q_v)$ and $t\in V(\widehat P_e)$, then $v\in V(P_e)$.
Thus all cycles defining $H_t$ contain $v$. Their union with $v$ is
connected and bridgeless, and the union of their tree paths with $v$
is a subtree spanning $H_t$. Each non-tree edge of $H_t$ is one of
the selected edges and retains its original tree path, so its
fundamental cycle is exactly its original $C_e$.
\end{proof}

\subsection{Asymmetric grouping of fundamental cycles}

The next lemma allows different branches to have the same label.
It does not require a bound on the number of branches or cycles.

\begin{lemma}[Asymmetric grouping]\label{lem:asymmetric}
Let $H$ be a connected bridgeless graph with at least two vertices,
and let $U$ be a spanning tree rooted at $z$.
Suppose that every non-tree fundamental cycle contains $z$ and has
length at most $L$. Suppose also that there is a map
$\lambda:V(H)\setminus\{z\}\to\{1,2,3\}$ which is constant on each
component of $U-z$ and satisfies $\lambda(a)\ne\lambda(b)$ for every
$ab\in E(H)\setminus E(U)$ with $a,b\ne z$.
Then $H$ has a strongly oriented spanning subgraph $\vec K$ satisfying
\[
 \dist_{\vec K}(x,z)\le L-1,\qquad
 \dist_{\vec K}(z,x)\le2(L-1)
 \quad\text{for every }x\in V(H).
\]
In particular, $\diam(\vec K)\le3(L-1)$.
\end{lemma}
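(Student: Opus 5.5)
The plan is to orient the tree edges of $U$ branch by branch, according to their label, either \emph{downward} (away from $z$) or \emph{upward} (towards $z$). Each non-tree edge is then directed so that its fundamental cycle becomes a directed cycle through $z$.

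\textbf{Depth bound.} First I record that every vertex $x\ne z$ has depth at most $L-1$ in $U$. The parent edge of $x$ is not a bridge, so it lies in some fundamental cycle $C_e$. This cycle contains $z$, so it contains the whole tree path from $x$ to $z$, and $|C_e|\le L$ forces that path to have length at most $L-1$. The same argument bounds each side of a fundamental cycle: the tree path from an end of the non-tree edge up to $z$ has length at most $L-1$.

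\textbf{Fixing the orientation.}
\begin{itemize}
\item Every tree edge in a label-$1$ branch is oriented downward, and every tree edge in a label-$3$ branch is oriented upward.
\item In label-$2$ branches, let $W$ be the set of tree edges lying on some fundamental cycle whose non-tree edge joins a label-$2$ vertex to a label-$1$ vertex or to $z$. Edges of $W$ are oriented upward; all other label-$2$ tree edges are oriented downward.
\item $W$ is closed under taking ancestor edges, because such a cycle contains the whole tree path from the edge up to $z$. Consequently, if the parent edge of $x$ is not in $W$, then no edge below $x$ is in $W$.
\item Non-tree edges are directed from the lower label to the higher label: $1\to2$, $1\to3$ and $2\to3$. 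An edge $zb$ is directed $b\to z$ when $\lambda(b)=1$, and $z\to b$ when $\lambda(b)\in\{2,3\}$.
\end{itemize}
Then every fundamental cycle of type $(1,2)$, $(1,3)$ or $(z,\cdot)$ is a directed cycle through $z$: down the label-$1$ side (or along $zb$), across, and up the other side. For the $(1,2)$ and $(z,2)$ cycles this uses that their label-$2$ side lies entirely in $W$. Take $\vec K$ to be all of $H$ with this orientation; note that $(2,3)$-cycles need not be directed cycles, and this is not required.

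\textbf{Distance checks.}
\begin{itemize}
\item \emph{$x$ of label $1$ or $3$, or of label $2$ with parent edge in $W$.} The parent edge of $x$ lies on a directed cycle $C$ of the above types of length at most $L$. Moving along $C$ in either direction gives $\dist_{\vec K}(x,z)\le L-1$ and $\dist_{\vec K}(z,x)\le L-1$.
\item \emph{$x$ of label $2$ with parent edge not in $W$.} That edge lies on some $(2,3)$-cycle $C'$ with non-tree edge $ab$, $\lambda(a)=2$, $\lambda(b)=3$. By the closure property of $W$, the edges of the tree path from $x$ down to $a$ are all oriented downward. So $x$ goes down to $a$, crosses to $b$, and climbs the upward label-$3$ path to $z$, all inside $C'$, giving $\dist_{\vec K}(x,z)\le L-1$.
\item For the reverse direction, let $y$ be the nearest ancestor of $x$ whose parent edge lies in $W$, or $y=z$ if no such ancestor exists. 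By the first case, $\dist_{\vec K}(z,y)\le L-1$. Every tree edge from $y$ down to $x$ is oriented downward, and by the depth bound this path has length at most $L-1$. Hence $\dist_{\vec K}(z,x)\le2(L-1)$.
\end{itemize}
Routing any pair through $z$ gives $\diam(\vec K)\le3(L-1)$.

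The main obstacle is the label-$2$ branches. Their tree edges receive conflicting demands from $(1,2)$-cycles (which want them upward) and from $(2,3)$-cycles (which want them downward). The asymmetric rule resolves this: "upward if on any $(1,2)$ or $(z,2)$ cycle" produces an upward-closed set $W$, and that closure property is exactly what makes the two-cycle route $z\to y\to x$ work. It is also the reason the bound $2(L-1)$ appears only in the direction from $z$.
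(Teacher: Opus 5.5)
Your construction is essentially the paper's. $W$ is exactly the set of label-$2$ tree edges of the paper's $J_1$, and your orientation agrees with the paper's $\vec K$ on every arc the paper keeps. The only difference is that you also retain the $(2,3)$ chords with both ends in $S=V(J_1)$, which is harmless.

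\textbf{The gap is in your first distance case.} There you assert that for every label-$3$ vertex $x$ the parent edge lies on a directed cycle of type $(1,3)$ or $(z,3)$. This is false: the parent edge of a label-$3$ vertex may lie only on $(2,3)$-cycles, and then your conclusion $\dist_{\vec K}(z,x)\le L-1$ can fail. For example, let $U$ have edges $zp,zu,uu',zw$, with chords $pu$ and $u'w$. Put $\lambda(p)=1$, $\lambda(u)=\lambda(u')=2$ and $\lambda(w)=3$, so $L=4$. Then $W=\{zu\}$, and your orientation is $z\to p\to u\to z$, $u\to u'\to w\to z$. The only fundamental cycle through $wz$ is $zuu'wz$, which is not directed, and $\dist_{\vec K}(z,w)=4>L-1$. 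These problematic vertices are precisely the label-$3$ vertices outside $S$. The paper handles them together with the label-$2$ vertices outside $S$; your analysis treats only the label-$2$ ones, which is why you identified label $2$ as the sole obstacle.

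\textbf{How to fix it.} The conclusion still holds, but it needs your two-stage label-$2$ argument, run along a $(2,3)$-cycle instead of a tree path. The bound $\dist_{\vec K}(x,z)\le L-1$ is immediate from the upward label-$3$ tree path and your depth bound. For the other direction:
\begin{itemize}
\item Take a $(2,3)$-cycle $C'$ through the parent edge of $x$, with chord $ab$, $\lambda(a)=2$, $\lambda(b)=3$. Let $y$ be the deepest vertex of $U[z,a]$ whose parent edge lies in $W$, or $y=z$ if there is none.
\item The segment of $C'$ running from $y$ down to $a$, across $ab$, and up to $x$ is directed. It has length at most $L-1$, because it omits the nonempty path $U[x,z]$.
\item Your first case applies to $y$, since its parent edge lies in $W$, giving $\dist_{\vec K}(z,y)\le L-1$. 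Hence $\dist_{\vec K}(z,x)\le 2(L-1)$.
\end{itemize}
This is the paper's step of taking the last vertex of $S$ on the directed $z$--$x$ segment of a $J_2$-cycle. Simply concatenating your Case~3 bound on $\dist_{\vec K}(z,a)$ with $ab$ and $U[b,x]$ would not suffice; the length has to be measured along $C'$.
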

\begin{proof}
Call the components of $U-z$ the \emph{branches}, and let $A,B,C$
be the unions of the branches with labels $1,2,3$, respectively.
Each tree edge joining a branch to $z$ is treated as part of that branch.
A non-tree edge not incident with $z$ joins two different label classes.

Let $J_1$ be the union of all fundamental cycles except those whose
non-tree edge joins $B$ to $C$, and let $J_2$ be the union of these
remaining cycles. Include $z$ in each group, even when its family
of cycles is empty.
In $J_1$, orient the tree edges in $A$ away from $z$ and those in
$B,C$ towards $z$. Direct non-tree edges between label classes from
$A$ to $B$ or $C$. Direct a non-tree edge incident with $z$ towards
$z$ if its other endpoint is in $A$, and away from $z$ otherwise.
In $J_2$, orient the tree edges in $B$ away from $z$, those in $C$
towards $z$, and the non-tree edges from $B$ to $C$.
Every selected fundamental cycle is directed consistently within its
group. Every tree edge of $H$ belongs to a fundamental cycle, since
$H$ is bridgeless, so $J_1\cup J_2=H$.
Each nonroot vertex of either group lies on a selected directed cycle
of length at most $L$, and hence has distances at most $L-1$ in both
directions between itself and $z$ within that group.

Put $S=V(J_1)$. The tree part $J_1\cap U$ is a subtree containing
$z$ and spanning $S$. Thus the entire tree path to $z$ of a vertex
in $S$ lies in $J_1$; in particular, no descendant of a vertex
outside $S$ belongs to $S$.
Keep all arcs of $\vec J_1$, and add every arc of $\vec J_2$ having
at least one endpoint outside $S$. No added edge was already oriented,
so this defines an oriented spanning subgraph $\vec K$ of $H$.

Let $x\notin S$, and choose a selected directed cycle of $J_2$
through $x$. If $x\in B$, its directed segment to $z$ first goes
away from $z$ in $B$, crosses to $C$, and then follows $C$ towards $z$.
All vertices of the first part are descendants of $x$ and hence
outside $S$, so its edges and the crossing edge are retained.
On the $C$ part, an edge with an endpoint outside $S$ is added,
whereas an edge with both endpoints in $S$ already belongs to $J_1$
and has the same direction towards $z$.
Thus the whole return segment survives. If $x\in C$, its return
segment is its tree path to $z$, and the same argument applies.
In either case, $\dist_{\vec K}(x,z)\le L-1$.

On the directed $z$--$x$ segment of the chosen cycle, let $w$ be
the last vertex in $S$. Every edge of the suffix from $w$ to $x$
has an endpoint outside $S$, so this suffix survives and has length
at most $L-1$. Reach $w$ from $z$ inside $\vec J_1$ within $L-1$
steps, using a length-zero path if $w=z$, and then follow the suffix.
Hence $\dist_{\vec K}(z,x)\le2(L-1)$.
Vertices in $S$ retain their original paths in $\vec J_1$.
These bounds prove strong connectivity and the stated diameter bound.
\end{proof}

\subsection{Choosing and orienting the core}

\begin{proof}[Proof of Theorem~\ref{thm:core}]
Write $d=\diam(G)$, $\rho=\rad(G)$ and $R=\lfloor d/2\rfloor$.
Choose a center $c$ of $G$ and a breadth-first-search spanning tree
$T$ rooted at $c$. Its depth is at most $\rho$, so every fundamental
cycle $C_e$ has length at most $L=2\rho+1$.
Apply Lemma~\ref{lem:cycle-bags} to $G,T$, obtaining an indexing tree
$\mathcal T$, paths $Q_x$, representatives $r_x$ and bags $B_t=V(H_t)$.
Only the indexing tree is refined; the graph $G$ and its distances
are unchanged.
Lemma~\ref{lem:central} supplies a bag $B_t$ which is $R$-step dominating
in $G$. Let $t\in V(Q_v)$.

Assign distinct labels from $\{1,2,3\}$ to the at most three
components of $\mathcal T-t$.
For each $x\ne v$, give $x$ the label of the component containing $r_x$.
This defines a map $\lambda:V(G)\setminus\{v\}\to\{1,2,3\}$.
For every component $A$ of $T-v$, the paths $Q_x$ with $x\in V(A)$
and the edges corresponding to $E(A)$ form a connected subtree of
$\mathcal T$ avoiding $t$.
Hence $\lambda$ is constant on $V(A)$, and therefore on each component
of $T_t-v$, where $T_t=H_t\cap T$.

For each non-tree edge $e=ab$ of $H_t$ with $a,b\ne v$, the path
$\widehat P_e=\mathcal T[r_a,r_b]$ contains $t$.
Neither endpoint is $t$, so $r_a,r_b$ lie in different components
of $\mathcal T-t$, and therefore $\lambda(a)\ne\lambda(b)$.
All fundamental cycles of $H_t$ relative to $T_t$ contain $v$ and
have length at most $L$ by Lemma~\ref{lem:cycle-bags}.
Lemma~\ref{lem:asymmetric} therefore gives a strongly oriented
spanning subgraph $\vec K$ of $H_t$, within the original graph $G$,
such that
\begin{equation}\label{eq:group-root}
 \dist_{\vec K}(x,v)\le L-1=2\rho,\qquad
 \dist_{\vec K}(v,x)\le2(L-1)=4\rho
 \quad(x\in V(K)).
\end{equation}
Consequently,
\[
 V(K)=B_t,\qquad \diam(\vec K)\le3(L-1)=6\rho.
\]
Since $B_t$ is $R$-step dominating in $G$, so is $K$.
This completes the proof.
\end{proof}

\subsection{Completion of the main proof}

\begin{proof}[Proof of Theorem~\ref{thm:main}]
Take the oriented core $\vec K$ from Theorem~\ref{thm:core}.
Apply Lemma~\ref{lem:ear-extension} in the original graph $G$, with
$t=R=\lfloor d/2\rfloor$. It extends $\vec K$ to a strong orientation
$\vec G$ satisfying
\[
 \diam(\vec G)
 \le6\rho+2\sum_{i=1}^{R}\min\{2i,\eta-1\}
 \le6\rho+2R(R+1).
\]
Finally, $\rho\le d$ and $2R(R+1)\le d^2/2+d$ give~\eqref{eq:main}.
\end{proof}

\begin{remark}\label{rem:algorithm}
The proof gives a polynomial-time construction.
Distances and a center can be found by breadth-first search in $G$.
If $G$ has $n$ vertices, the indexing tree $\mathcal T$ has
$\sum_x\deg_T(x)=2n-2$ nodes and $2n-3$ edges.
The spanning tree, its fundamental cycles, the paths $\widehat P_e$
and all bags can be enumerated in polynomial time.
Testing the bags finds a central bag without requiring an algorithmic
form of the Helly theorem. The component labels and the asymmetric
two-group construction are explicit. Finally, a shortest prescribed-leg ear is obtained by
breadth-first search after deleting its prescribed edge; each ear
addition covers a new vertex. All steps therefore take polynomial time.
\end{remark}

\section{Sharpness of the leading coefficient}\label{sec:lower}

For completeness, we give a cactus construction of the classical type
used by Chv\'atal and Thomassen~\cite{CT78}. The odd-diameter case of
the classical construction was corrected by Kwok, Liu and
West~\cite{KLW10}; see also Huang et al.~\cite{HLLS14}.
We include both parity cases below.
A \emph{cactus} is a connected graph in which every block is an edge
or a cycle. The cacti below have only cycle blocks and are bridgeless.

Let $B_0$ consist of a single root. For $r\ge1$, form a rooted graph
$B_r$ from a cycle of length $2r+1$ with root $z$ by attaching a copy
of $B_{r-1}$ at each of the two neighbours of $z$, identifying the
root of each copy with that neighbour.
Put $a_r=r(r+1)$.

\begin{lemma}\label{lem:cactus-root}
The root of $B_r$ has eccentricity $r$.
In every strong orientation of $B_r$, its maximum directed distance
to a vertex and its maximum directed distance from a vertex are
both equal to $a_r$.
\end{lemma}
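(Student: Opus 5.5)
The plan is to argue by induction on $r$, using the block structure of the cactus $B_r$. Fix $r\ge1$ and let $C$ be the $(2r+1)$-cycle through the root $z$. Let $u_1,u_2$ be the two neighbours of $z$ on $C$, and let $B^{(1)},B^{(2)}$ be the copies of $B_{r-1}$ rooted at $u_1,u_2$. The base case $r=0$ is trivial: $B_0$ is a single vertex and $a_0=0$.

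\emph{Eccentricity.} Each $u_i$ is a cut vertex separating $B^{(i)}$ from the rest of $B_r$. Hence every path from $z$ to a vertex $x\in V(B^{(i)})$ passes through $u_i$, and
\[
\dist_{B_r}(z,x)=\dist_{B_r}(z,u_i)+\dist_{B^{(i)}}(u_i,x)\le 1+(r-1)=r,
\]
by induction. Vertices of $C$ lie within distance $r$ of $z$, and the two antipodal vertices of $C$ attain this. So $\ecc(z)=r$.

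\emph{Directed distances.} First I will record two structural facts.
\begin{itemize}
\item[(i)] In a strong orientation of a cactus, every cycle block is directed cyclically. Indeed, each edge of a cycle block lies in no other cycle, so a directed return path for that edge must run around its own block.
\item[(ii)] Every directed path between two blocks passes through the separating cut vertices. Consequently, for $x\in V(B^{(i)})$ we have
\[
\dist_{\vec B_r}(z,x)=\dist_{\vec B_r}(z,u_i)+\dist_{\vec B^{(i)}}(u_i,x),
\]
where the second term is computed in the induced (strong) orientation of the copy. The symmetric identity holds for distances into $z$.
\end{itemize}

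By (i), the cycle $C$ is directed, say $z\to u_1\to\cdots\to u_2\to z$. Then $\dist(z,u_1)=1$ and $\dist(z,u_2)=2r$, and every vertex of $C$ is within directed distance $2r$ from $z$. By the induction hypothesis, the maximum of $\dist_{\vec B^{(i)}}(u_i,x)$ over $x\in V(B^{(i)})$ equals $a_{r-1}$ for each $i$. Therefore
\[
\max_x\dist(z,x)=\max\{2r,\;1+a_{r-1},\;2r+a_{r-1}\}=2r+r(r-1)=a_r.
\]
The symmetric computation handles distances into $z$: now $\dist(u_1,z)=2r$, $\dist(u_2,z)=1$, and the maximum is again $2r+a_{r-1}=a_r$. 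Reversing the orientation of $C$ only swaps the roles of $u_1$ and $u_2$, so the same value is obtained.

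The main obstacle is stating (i) and (ii) cleanly. In particular, one must check that the restriction of a strong orientation of $B_r$ to a copy $B^{(i)}$ is itself strong, so that the induction hypothesis applies to it. This follows from (ii): any directed path between two vertices of $B^{(i)}$ that leaves the copy must exit and re-enter through $u_i$, so it can be shortcut to a directed path inside $B^{(i)}$.
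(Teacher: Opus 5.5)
Your proof is correct and follows essentially the same route as the paper. Both argue by induction on $r$. Both use the cut-vertex shortcut argument to show that each cycle block is directed cyclically and that each attached copy inherits a strong orientation with unchanged internal distances. Both then compute $2r+a_{r-1}=a_r$ from the neighbour of $z$ at directed distance $2r$ from (respectively to) the root.

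The only difference is cosmetic: you witness $\ecc(z)=r$ by the far vertices of $C$ rather than by the attached copies. One small thing to add: say explicitly that the same shortcut argument keeps directed paths between vertices of $C$ inside $C$, since that is what justifies $\dist(z,u_2)=2r$.
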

\begin{proof}
The assertion is immediate for $r=0$.
Every vertex on the new cycle is within distance $r$ of its root,
and every vertex in an attached copy is within distance
$1+(r-1)=r$. Equality follows from the induction hypothesis in an
attached copy, so the undirected eccentricity is $r$.

Every component outside a cycle block of a cactus attaches to that
block at a single vertex. An excursion outside the block therefore
returns to its departure vertex and can be deleted from a directed
walk. The same argument applies to an attached copy, which meets the
rest of the graph only at its root.
Thus a strong orientation restricts to a strong orientation of each
cycle block and attached copy, and their internal directed distances
are unchanged. In particular, every cycle block is oriented cyclically.

Let $p,q$ be the two neighbours of the root on the new cycle, labelled
so that $p\to z\to q$.
The directed distance from $z$ to $p$, and from $q$ to $z$, is $2r$.
By induction, the copy rooted at $p$ contains a vertex at directed
distance $a_{r-1}$ from $p$, while the copy rooted at $q$ contains
a vertex at directed distance $a_{r-1}$ to $q$.
They give root distances at least
$2r+a_{r-1}=a_r$ in the two directions.
Conversely, a route from or to the root uses at most $2r$ edges of
the new cycle and at most $a_{r-1}$ edges in an attached copy.
Both maximum directed distances are therefore exactly $a_r$.
\end{proof}

\begin{proposition}\label{prop:lower}
For every integer $d\ge1$, there is a bridgeless cactus $G_d$ such that
\[
 \diam(G_d)=d,\qquad
 \od(G_d)=\left\lfloor\frac{(d+1)^2}{2}\right\rfloor.
\]
\end{proposition}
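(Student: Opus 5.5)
We construct $G_d$ by gluing two copies of the rooted cacti $B_r$ at their roots, choosing the two depths according to the parity of $d$, and then read off both quantities from Lemma~\ref{lem:cactus-root}.

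\emph{Even case.} For $d=2r$, let $G_d$ be two copies of $B_r$ with their roots identified into a single vertex $z$. Equivalently, $G_d$ is a $(2r+1)$-cycle carrying two copies of $B_{r-1}$ at each root neighbour. The graph is a bridgeless cactus. By Lemma~\ref{lem:cactus-root}, $\ecc(z)=r$ in each copy, so $\diam(G_d)\le 2r$. Equality holds because the two farthest vertices, lying in different copies, are separated by the cut vertex $z$, so their distance is $r+r=2r$.

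For the oriented diameter, I use the restriction argument from the proof of Lemma~\ref{lem:cactus-root}. Any strong orientation of $G_d$ restricts to strong orientations of both copies, because $z$ is a cut vertex. Lemma~\ref{lem:cactus-root} then gives a vertex $u$ in the first copy with $\dist(u,z)=a_r$ and a vertex $w$ in the second copy with $\dist(z,w)=a_r$. Every $u$--$w$ path passes through $z$, so
\[
\dist(u,w)\ge 2a_r=2r(r+1)=\left\lfloor\tfrac{(d+1)^2}{2}\right\rfloor .
\]
For the upper bound, orient every cycle cyclically, arbitrarily. Again by Lemma~\ref{lem:cactus-root}, every vertex is within $a_r$ of $z$ in both directions, so every ordered pair routed through $z$ is at distance at most $2a_r$.

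This routing bound is too crude for pairs inside a single copy. Such pairs need not pass through $z$ and could in principle need more than $2a_r$. I would handle this by induction: within $B_r$, the distance between two vertices is at most the cost of going up to the lowest common block root and back down. Going up to a root of depth $j$ costs at most $a_j$, and going down costs at most $a_j$. The cycle segment between the two attachment points can be charged against these, giving a total of at most $2a_r$. This is the step I would check most carefully.

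\emph{Odd case.} For $d=2r+1$, identify the roots of one copy of $B_r$ and one copy of $B_{r+1}$? That gives diameter $2r+1$ and lower bound $a_r+a_{r+1}=(r+1)(2r+2)$, which is twice the target $(r+1)^2$, so it is too large. The lower bound must also hold for every graph of diameter $d$, so the construction has to be different. Attaching the two copies of $B_r$ across an edge is impossible, since that edge would be a bridge.

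Instead, I would follow Kwok--Liu--West and join the roots of two copies of $B_r$ by a cycle of length $4$, or perhaps length $3$. The undirected diameter is then $r+1+r=2r+1$. The oriented distance is at least $a_r+(\text{cycle contribution})+a_r$, and the target is $2r^2+4r+2$. This means the cycle must force an additive $2r+2$ in one direction.

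The main obstacle is tuning this connector cycle. The exact gadget needs the two roots at undirected distance $1$ while every orientation forces a directed detour of length about $2r+2$. I expect this to use a cycle of length $2r+3$ with the roots adjacent, together with extra copies of $B_{r}$ hung on that cycle to keep the diameter at $2r+1$. I would compute both bounds for that candidate and adjust until they meet at $\lfloor (d+1)^2/2\rfloor$.
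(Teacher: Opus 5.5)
\paragraph{Even case.} Your even case is correct and follows the paper: two copies of $B_r$ glued at $z$, the cut-vertex lower bound $2a_r$, and routing through $z$ for the upper bound. The worry in your third paragraph is unfounded. For every ordered pair $(u,w)$, including pairs in the same copy, $\dist(u,w)\le\dist(u,z)+\dist(z,w)\le 2a_r$, since an upper bound on a minimum needs only one walk. The induction over block roots is therefore unnecessary. Your ``equivalently'' description is also wrong, though you do not use it. The glued graph is two $(2r+1)$-cycles through $z$, each carrying one copy of $B_{r-1}$ at each neighbour of $z$. A single cycle with two copies at each neighbour is a different graph, and its oriented diameter is smaller than $2r(r+1)$.

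\paragraph{Odd case: the genuine gap.} You never fix a construction or prove either bound, so the proposition is established only for even $d$.

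Your rejection of $B_r$ glued to $B_{r+1}$ rests on an arithmetic slip. The value $a_r+a_{r+1}=(r+1)(2r+2)=2(r+1)^2=\lfloor (d+1)^2/2\rfloor$ is exactly the target, as your own later value $2r^2+4r+2$ shows. That candidate actually fails because its diameter is $2r+2$ for $r\ge1$, not $2r+1$. Inside $B_{r+1}$, two depth-$r$ vertices in the copies of $B_r$ at the two neighbours of the root are separated by those cut vertices, which are at distance $2$. The lower bound also does not need to hold for every graph of diameter $d$; the proposition concerns one graph. Finally, a connector cycle of length $3$ or $4$ forces a detour of at most $3$, far short of $2r+2$.

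\paragraph{The missing construction.} The gadget you guess at the end is right, but without extra copies, which cannot decrease the diameter anyway. The paper takes a cycle $C$ of length $2r+3$ and attaches one copy of $B_r$ at each of two adjacent vertices $p,q$.
\begin{itemize}
\item \emph{Diameter.} It is $2r+1$, by checking four types of pairs: $r+1+r$ across copies, at most $2r$ within a copy, at most $r+(r+1)$ from a copy to $C$, and at most $r+1$ on $C$.
\item \emph{Lower bound.} Since $C$ is a block, every strong orientation makes it a directed cycle. Hence one directed segment between $p$ and $q$ has length $2r+2$, and every directed path from the copy at its start to the copy at its end must traverse it. In the first copy choose a vertex at directed distance $a_r$ to its root, and in the second a vertex at directed distance $a_r$ from its root (Lemma~\ref{lem:cactus-root}). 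This gives $2a_r+2r+2=2(r+1)^2$.
\item \emph{Upper bound.} Route pairs inside one copy through its root, in at most $2a_r$ arcs. Route all other pairs using at most $a_r$ arcs in each copy and at most $2r+2$ arcs of $C$. This matches the lower bound.
\end{itemize}
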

\begin{proof}
First let $d=2r$, where $r\ge1$.
Take two copies of $B_r$ and identify their roots.
Lemma~\ref{lem:cactus-root} gives diameter $2r$: vertices at distance
$r$ from the common root in different copies attain this value.
In any strong orientation, choose a vertex in the first copy whose
directed distance to the root is $a_r$, and a vertex in the second
whose directed distance from the root is $a_r$.
Every directed path between them passes through the root, so its
length is at least $2a_r$. Conversely, routing through the root
gives diameter at most $2a_r$.
Thus $\od(G_{2r})=2r(r+1)$.

Now let $d=2r+1$, where $r\ge0$.
Take a cycle $C$ of length $2r+3$ and attach copies of $B_r$
at two adjacent vertices $p,q$ of $C$.
Two vertices in different copies are at distance at most $2r+1$.
Two vertices in one copy are at distance at most $2r$;
a vertex of a copy and a vertex of $C$ are at distance at most
$r+(r+1)$; and two vertices of $C$ are at distance at most $r+1$.
Vertices at distance $r$ from $p$ and $q$ in the two copies attain
distance $2r+1$. This also covers $r=0$.

Every strong orientation directs $C$ cyclically.
In one of the two directions between $p$ and $q$, the directed
distance along $C$ is $2r+2$.
Choose vertices in the two copies whose directed distances to the
initial attachment vertex and from the terminal attachment vertex
are both $a_r$. Any directed path between them must use this
long directed segment of $C$, giving a lower bound
\[
 2a_r+2r+2=2(r+1)^2.
\]
For the reverse inequality, pairs within an attached copy can be
routed through its root in at most $2a_r$ steps.
All other pairs can be joined using at most $a_r$ steps in each
attached copy and at most $2r+2$ steps on $C$.
Thus the oriented diameter is exactly $2(r+1)^2$.
The two parity cases give the stated formula.
\end{proof}

\begin{corollary}\label{cor:asymptotic}
For every integer $d\ge1$,
\[
 \left\lfloor\frac{(d+1)^2}{2}\right\rfloor
 \le f(d)\le\tfrac12d^2+7d.
\]
Consequently,
\[
 f(d)=\tfrac12d^2+O(d)
 \qquad\text{and}\qquad
 \lim_{d\to\infty}\frac{f(d)}{d^2}=\frac12.
\]
\end{corollary}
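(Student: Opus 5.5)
The corollary combines Proposition~\ref{prop:lower} with Theorem~\ref{thm:main}, and I would prove it in that order.

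For the lower bound, fix $d\ge1$. Proposition~\ref{prop:lower} supplies a bridgeless cactus $G_d$ with $\diam(G_d)=d$ and $\od(G_d)=\lfloor (d+1)^2/2\rfloor$. Since $G_d$ is connected, bridgeless and of diameter exactly $d$, it is admissible in the definition of $f(d)$. Hence
\[
 f(d)\ge\od(G_d)=\left\lfloor\frac{(d+1)^2}{2}\right\rfloor .
\]

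For the upper bound, take any bridgeless graph $G$ with diameter $d$. Theorem~\ref{thm:main} gives $\od(G)\le\tfrac12d^2+7d$. Taking the maximum over all such $G$ yields $f(d)\le\tfrac12d^2+7d$. The one point worth checking here is that the maximum defining $f(d)$ is finite and attained. This follows because the theorem gives a uniform bound, and $\od$ takes only integer values.

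For the asymptotics, I would sandwich $f(d)$ between the two bounds:
\[
 \tfrac12d^2+d-\tfrac12\le\left\lfloor\frac{(d+1)^2}{2}\right\rfloor\le f(d)\le\tfrac12d^2+7d.
\]
Subtracting $\tfrac12 d^2$ throughout gives $|f(d)-\tfrac12d^2|\le 7d$ for $d\ge1$, so $f(d)=\tfrac12d^2+O(d)$. Dividing by $d^2$ and letting $d\to\infty$, both outer terms tend to $\tfrac12$, so $\lim_{d\to\infty} f(d)/d^2=\tfrac12$ by squeezing.

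None of these steps is a real obstacle, since all the substantive content lies in the two cited results. The only care needed is the parity-free inequality $\lfloor (d+1)^2/2\rfloor\ge \tfrac12 d^2+d-\tfrac12$. It holds because $(d+1)^2/2$ is either an integer or a half-integer, so taking the floor lowers it by at most $\tfrac12$.
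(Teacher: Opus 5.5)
Your proposal is correct and takes the same route as the paper: the lower bound comes from Proposition~\ref{prop:lower}, the upper bound from Theorem~\ref{thm:main}, and the asymptotics follow by squeezing. One harmless slip: since $(d+1)^2/2=\tfrac12d^2+d+\tfrac12$, your floor argument actually gives $\lfloor (d+1)^2/2\rfloor\ge\tfrac12d^2+d$, so the weaker bound $\tfrac12d^2+d-\tfrac12$ you wrote is still true and suffices.
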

\begin{proof}
Combine Proposition~\ref{prop:lower} with Theorem~\ref{thm:main}.
\end{proof}

The quadratic term is therefore determined. The remaining difference
between these bounds is linear in $d$. Determining the optimal linear
term, or the exact values of $f(d)$ in general, requires a further
argument.

\section*{Artificial intelligence use declaration}

OpenAI's ChatGPT was used to assist with mathematical exploration,
proof development, computational checks, and the drafting and language
editing of this manuscript. The authors take full responsibility for the results presented in this manuscript.

\end{document}